\theoremstyle{plain}
\newtheorem{theorem}{Theorem}[section]
\newtheorem{fact}[theorem]{Fact}
\newtheorem{lemma}[theorem]{Lemma}
\newtheorem{corollary}[theorem]{Corollary}
\newtheorem{conjecture}{Conjecture}
\newtheorem{proposition}[theorem]{Proposition}
\theoremstyle{definition}
\newtheorem{definition}{Definition}
\newtheorem*{remark*}{Remark}
\newtheorem*{question*}{Question}
\DeclareMathOperator{\range}{ran}
\DeclareMathOperator{\baire}{\omega^\omega}
\DeclareMathOperator{\seqN}{\omega^{<\omega}}
\DeclareMathOperator{\seq2}{2^{<\omega}}
\DeclareMathOperator{\analytic}{\mathbf{\Sigma}_1^1}
\DeclareMathOperator{\coanalytic}{\mathbf{\Pi}_1^1}
\DeclareMathOperator{\infomega}{[\omega]^{\omega}}
\DeclareMathOperator{\IP}{IP}
\DeclareMathOperator{\concat}{^\frown}
\title{Ideal Analytic Sets}
\author{Łukasz Mazurkiewicz}
\email{lukasz.mazurkiewicz@pwr.edu.pl}
\author{Szymon Żeberski}
\email{szymon.zeberski@pwr.edu.pl}
\address{Łukasz Mazurkiewicz, Szymon Żeberski, Faculty of Pure and Applied Mathematics, Wrocław University of Science and Technology, 50-370 Wrocław, Poland}
\thanks{The work has been partially financed by grant {\bf 8211204601, MPK: 9130730000} from the Faculty of Pure and Applied Mathematics, Wrocław University of Science and Technology.
    \\
    Data sharing is not applicable to this article as no new data were created or analyzed in this study.
    \\
    {\bf Corresponding author}: Łukasz Mazurkiewicz, lukasz.mazurkiewicz@pwr.edu.pl
	\\
	AMS Classification 2020: Primary: 03E15,  28A05, 54H05; Secondary: 03E75.
	\\
	Keywords: analytic set, Borel set, Polish space, analytic-complete set, Borel reduction, ideal, Hindman ideal, tree, perfect tree, superperfect tree}
\begin{document}

    \maketitle
    \begin{abstract}
The aim of this paper is to give natural examples of $\analytic$-complete and $\coanalytic$-complete sets. 

In the first part, we consider ideals on $\omega$. We use a unified approach introduced in \cite{uogolnione_idealy} to create reductions of the collection of ill-founded trees to the ideals, proving $\analytic$-completeness of the ideals.

In the second part, we show the connection between this topic, families of trees and coding of $\sigma$-ideals of Polish spaces. In particular, we use the unified approach to prove that sets of codes for closed Ramsey-null sets, for closed $\sigma$-compact sets and for closed not strongly dominating sets are $\coanalytic$-complete.
\end{abstract}
    \section{Introduction}

We will use standard set theoretic notions (mostly following \cite{srivastava} and \cite{jech}). In particular, $\omega$ is the first infinite cardinal, i.e. the set of all natural numbers,  $\seqN$ and $\seq2$ are sets of finite sequences of elements of $\omega$ and $\{0,1\}$, respectively.
  For this part assume that $\mathcal{X}$ and $\mathcal{Y}$ are Polish spaces, i.e. separable completely metrizable topological spaces. Classical examples are the real line $\mathbb{R}$, the Baire space $\omega^\omega$, the Cantor space $2^\omega$.
\begin{definition}
    Let $A\subseteq \mathcal{X}$, $B\subseteq \mathcal{Y}$. We say that $B$ is \textit{Borel reducible} to $A$ if there is a Borel function $f: \mathcal{Y}\rightarrow \mathcal{X}$ satisfying $f^{-1}[A]=B$.
\end{definition}

\begin{definition}
    We say that $A\subseteq \mathcal{X}$ is \textit{$\analytic$-complete} if $A$ is analytic and for every Polish space $\mathcal{Y}$ every analytic set  $B\subseteq \mathcal{Y}$ is Borel reducible to $A$.
\end{definition}
\begin{fact}
    \label{analityczny_z_analitycznego}
    If an analytic set $B$ is Borel reducible to $A$ and $B$ is $\analytic$-complete, then $A$ is $\analytic$-complete.
\end{fact}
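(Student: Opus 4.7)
The plan is to exhibit, for an arbitrary analytic set $C$ in an arbitrary Polish space, a Borel reduction of $C$ to $A$, obtained by composing two already-available reductions. Implicit in the formulation is that $A$ is itself analytic; this is built into the definition of $\analytic$-completeness and will be checked separately (in concrete applications it is always either obvious or explicitly verified).

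First, I fix a third Polish space $\mathcal{Z}$ and an arbitrary analytic set $C\subseteq\mathcal{Z}$; the goal is to produce a Borel function $h\colon\mathcal{Z}\to\mathcal{X}$ with $h^{-1}[A]=C$. Since $B$ is assumed $\analytic$-complete and $C$ is analytic, there is a Borel $g\colon\mathcal{Z}\to\mathcal{Y}$ with $g^{-1}[B]=C$. By the hypothesis that $B$ is Borel reducible to $A$, there is a Borel $f\colon\mathcal{Y}\to\mathcal{X}$ with $f^{-1}[A]=B$.

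Next, I set $h:=f\circ g$. Two verifications are needed: that $h$ is Borel, and that $h^{-1}[A]=C$. The former is the standard fact that the composition of Borel measurable functions between Polish spaces is Borel, proved by noting that for any Borel $E\subseteq\mathcal{X}$ the preimage $h^{-1}[E]=g^{-1}[f^{-1}[E]]$ is Borel, since $f^{-1}[E]$ is Borel by Borel measurability of $f$, and then $g^{-1}$ of a Borel set is Borel by Borel measurability of $g$. The latter is a one-line calculation:
\[
h^{-1}[A]=g^{-1}\bigl[f^{-1}[A]\bigr]=g^{-1}[B]=C.
\]
Since $C$ was arbitrary and $A$ is analytic, $A$ is $\analytic$-complete.

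I do not expect any genuine obstacle: the statement is essentially the transitivity of Borel reducibility together with the preservation of analyticity at the target. The only thing one has to be slightly careful about is making the universal quantifier over $(\mathcal{Z},C)$ explicit, so as to hand it off correctly to the completeness hypothesis on $B$.
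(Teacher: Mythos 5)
Your proof is correct and is exactly the standard composition argument the paper has in mind (the paper states this Fact without proof, treating it as folklore). You also rightly flag that the analyticity of $A$ is an implicit hypothesis not literally guaranteed by the reducibility of $B$ to $A$, and must be checked separately in applications.
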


Note that the existence of analytic non-Borel sets and closure of Borel sets under Borel maps implies that all $\analytic$-complete sets are not Borel. Moreover, in order to apply \Cref{analityczny_z_analitycznego}, we need an example of a $\analytic$-complete set. Such an example can be found among trees over $\omega$.
\begin{definition}
    A set $T\subseteq\seqN$ is a \textit{tree over $\omega$} if
    $$\left(\forall \sigma\in\seqN\right)\left(\forall \tau\in\seqN\right)\left(\sigma\in T\land \tau\subseteq\sigma\implies\tau\in T\right).$$
    The set of all trees over $\omega$ will be denoted by $\textrm{Tree}_\omega$. The \textit{body} of a tree $T$ is
    $$[T]=\left\{x\in\baire: \left(\forall n\in\omega\right)\left(x\upharpoonright n\in T\right)\right\}.$$
\end{definition}

The set $\textrm{Tree}_\omega$ can be seen as a $G_\delta$ subset of $P(\seqN)$. Therefore, $\textrm{Tree}_\omega$ is a Polish space. By $\textrm{IF}_\omega$ let us denote the collection of all ill-founded trees over $\omega$, i.e. all trees with non-empty body. It turns out that $\textrm{IF}_\omega$ is an example of a $\analytic$-complete set(see, e.g., \cite[Example 4.2.1]{srivastava}).

\begin{definition}
 We say that $A\subseteq \mathcal{X}$ is \textit{$\coanalytic$-complete} if $\mathcal{X}\setminus A$ is $\analytic$-complete.
\end{definition}

In the first part of this paper we will consider ideals on natural numbers. 

\begin{definition}
    A family $\mathcal{I}\subseteq P(\omega)$ is an \textit{ideal} if $\mathcal{I}$ is closed
    \begin{itemize}
        \item under taking subsets, i.e $(\forall I\in\mathcal{I})(\forall J\subseteq I)(J\in\mathcal{I})$,
        \item under finite unions, i.e. $(\forall I,J\in\mathcal{I})(I\cup J\in\mathcal{I})$.
    \end{itemize}
    We say that $\mathcal{I}$ is proper if $\omega\not\in\mathcal{I}$.
\end{definition}
The space $P(\omega)$ is naturally (via characteristic functions $A\mapsto\chi_A$) homeomorphic to the Cantor space $2^\omega$. Therefore, every ideal $\mathcal{I}$ on $\omega$ can be seen as a subset of the Cantor space. In this context, questions about Borel complexity, being $\analytic$-complete or being $\coanalytic$-complete, make sense for $\mathcal{I}$.
    \section{Ideals on $\omega$}

In this section we will follow the simplified version of the notation introduced in \cite{uogolnione_idealy}. Let $\Lambda$ be a countable infinite set and $\rho: \mathcal{P}(\omega)\rightarrow\mathcal{P}(\Lambda)$ be a monotone function. We define a family
$$\mathcal{I}_\rho=\left\{A\subseteq\Lambda: (\forall F\in[\omega]^\omega) (\rho(F)\nsubseteq A\right)\}.$$

In \cite{uogolnione_idealy}, authors give necessary conditions for $\mathcal{I}_\rho$ to be an ideal and to be a coanalytic set. Moreover, they show that (using more general approach, which we omit for simplicity) every ideal can be represented in a similar way.

\subsection{Unified approach to reduction}\hfill\\

In the following subsections we will give examples of families, defined as above, which are $\coanalytic$-complete. In order to do that we construct reductions of $\textrm{IF}_\omega$. Since they are very similar to each other, it makes sense to define the reduction here and only later prove that they work for specific examples.

Fix a function $\rho$ as earlier. We would like to construct a Borel function $\varphi: \textrm{Tree}_\omega\rightarrow\mathcal{P}(\Lambda)$ such that $\varphi^{-1}[\mathcal{I}_\rho^+]=\textrm{IF}_\omega$.
Let us first fix an injection $\alpha: \seqN\rightarrow\omega$ satisfying
\begin{equation}\label{warunek_alfa}
    \sigma\subseteq\tau\Rightarrow\alpha(\sigma)\leq\alpha(\tau).
\end{equation}
Now define
$$\varphi(T)=\bigcup_{\sigma\in T}\rho(\{\alpha(\sigma\upharpoonright k):\;\; k\leq|\sigma|\}).$$
From monotonicity of $\rho$, if $\tau\in[T]$, then $\rho(\{\alpha(\tau\upharpoonright k): k\in\omega\})\subseteq\varphi(T)$, so image of an ill-founded tree is in $\mathcal{I}_\rho^+$. Therefore, the only thing left is to prove that if $\rho(B)\subseteq\varphi(T)$ for some $B\in[\omega]^\omega$, then $[T]\ne\emptyset$.

\subsection{Ramsey ideal}
\begin{definition}
    Define $r: [\omega]^\omega\rightarrow[[\omega]^2]^\omega$ by
    $$r(B)=[B]^2.$$
    $\mathcal{R}=\mathcal{I}_r$ is called the \textit{Ramsey ideal}.
\end{definition}

In \cite[Theorem 3.3]{barnabas} the following result was already proven. However, we would like to give a proof based on the unified approach.
\begin{theorem}
    $\mathcal{R}$ is $\coanalytic$-complete.
\end{theorem}
\begin{proof}
    Suppose that $[B]^2\subseteq\varphi(T)$ for some $B\in[\omega]^\omega$. Take any $a,b\in B$, $a<b$. Then $\{a,b\}\in\varphi(T)$, hence (from definition of $\varphi$), there is $\sigma\in T$ such that $\alpha^{-1}(a)=\sigma\upharpoonright k$ and $\alpha^{-1}(b)=\sigma\upharpoonright l$ for some $k,l\leq|\sigma|$, $k\ne l$. Thus, because of condition (\ref{warunek_alfa}), $\alpha^{-1}(a)\subset\alpha^{-1}(b)$ and $\alpha^{-1}(a),\alpha^{-1}(b)\in T$. Using this argument we can find a sequence $(a_n)_{n\in\omega}$ of elements of $B$ such that
    $$i<j\implies \alpha^{-1}(a_i)\subset\alpha^{-1}(a_j)$$
    and $\alpha^{-1}(a_i)\in T$ for every $i\in\omega$. That generates an infinite branch in tree $T$.
\end{proof}

\subsection{Hindman ideal}
\begin{definition}
Define $\textrm{FS}:[\omega]^\omega\rightarrow[\omega]^\omega$ by
$$\textrm{FS}(B)=\left\{\sum_{n\in F}n:\;\; F\subseteq B\text{ is nonempty and finite}\right\}.$$
A set $A\subseteq\omega$ is called an IP-set if there is an infinite $B\subseteq A$ satisfying $\textrm{FS}(B)\subseteq A$.

The \textit{Hindman ideal} is the family $\mathcal{H}$ of all non-IP-sets, i.e. $\mathcal{H}=\mathcal{I}_{\textrm{FS}}$. As proven in \cite[Proposition 5.2.(5)]{uogolnione_idealy}, the Hindman ideal is coanalytic. The theorem below is a strengthening of that result.
\end{definition}

\begin{theorem} \label{hindman_complete}
$\mathcal{H}$ is $\coanalytic$-complete.
\end{theorem}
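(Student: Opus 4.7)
The plan is to apply the unified approach with the injection $\alpha\colon\seqN\to\omega$ taken to be $\alpha(\sigma)=2^{g(\sigma)}$, where $g\colon\seqN\to\omega$ is any bijection with $\sigma\subsetneq\tau\Rightarrow g(\sigma)<g(\tau)$ (e.g.\ a level-by-level enumeration of $\seqN$). This $\alpha$ is an injection satisfying (\ref{warunek_alfa}), and because its image consists of distinct powers of $2$, every natural number has a unique binary expansion as a sum of $\alpha$-values. Setting
\[
E(n)=\{g^{-1}(i):\text{the $i$-th binary digit of }n\text{ equals }1\}\subseteq\seqN,
\]
uniqueness of binary expansion yields the equivalence: $n\in\varphi(T)$ if and only if $E(n)$ is linearly ordered by $\subseteq$ and $E(n)\subseteq T$.

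It remains to show that whenever $\textrm{FS}(B)\subseteq\varphi(T)$ for an infinite $B\subseteq\omega$, then $[T]\neq\emptyset$. The key step is to extract an infinite $B^{*}\subseteq B$ whose elements have pairwise disjoint binary supports. Inductively set $B_{0}=B$ and, at step $k\geq 0$, apply Hindman's theorem to the colouring of $\textrm{FS}(B_{k})$ by residue modulo $2^{k+1}$; since $b_{1}+b_{2}$ must share the monochromatic colour $c^{*}$ with $b_{1}$ and $b_{2}$, the congruence $2c^{*}\equiv c^{*}\pmod{2^{k+1}}$ forces $c^{*}=0$, so the resulting infinite $B_{k+1}\subseteq B_{k}$ satisfies $\textrm{FS}(B_{k+1})\subseteq 2^{k+1}\omega$. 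Then diagonally pick $b_{k}^{*}\in B_{M_{k-1}}$, where $M_{-1}=0$ and $M_{k}$ is one more than the highest bit of $b_{k}^{*}$; the binary support of $b_{k}^{*}$ then lies in $[M_{k-1},M_{k})$, so the $b_{k}^{*}$'s have pairwise disjoint binary supports.

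Setting $B^{*}=\{b_{k}^{*}:k\in\omega\}$, for any finite nonempty $F\subseteq B^{*}$ the disjointness of supports yields $E(\sum F)=\bigcup_{b\in F}E(b)$ (a disjoint union), and since $\sum F\in\varphi(T)$ this union is a chain in $T$. Consequently $C:=\bigcup_{b\in B^{*}}E(b)\subseteq T$ is a chain — any two of its elements lie in some finite subunion, itself a chain — and $C$ is infinite because the nonempty $E(b_{k}^{*})$ are pairwise disjoint. An infinite chain in $T$ determines an infinite branch, so $[T]\neq\emptyset$, completing the reduction. The principal obstacle is arranging the disjoint supports: without them, binary carries from addition may obstruct the alignment of the individual chains $E(b)$ into a single chain.
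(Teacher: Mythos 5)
Your reduction is the same unified $\varphi$, but your combinatorial core diverges from the paper's and contains a genuine gap. The paper takes $\alpha:\seqN\rightarrow\{2^{2n}:n\in\omega\}$, so every element of $\varphi(T)$ has $1$s only at even binary positions; then for two summands sharing a bit, the carry would place a $1$ at an odd position of their sum, which also lies in $\varphi(T)$ --- contradiction. Disjointness of supports is thus automatic and the chain is built directly, with no Ramsey-theoretic input. You instead take $\alpha$ onto all powers of $2$, lose this parity obstruction, and try to recover disjoint supports via Hindman's theorem. The problem is that Hindman's theorem applied to a colouring of $\textrm{FS}(B_k)$ does not return an infinite \emph{subset} $B_{k+1}\subseteq B_k$ with $\textrm{FS}(B_{k+1})$ monochromatic, as you assert; it returns a sum subsystem, i.e.\ an infinite $B_{k+1}\subseteq\textrm{FS}(B_k)$ whose elements are sums over pairwise disjoint finite blocks of $B_k$. (An actual subset is impossible in general: if every element of $B_k$ is odd, no infinite subset of $B_k$ has $\textrm{FS}$ monochromatic modulo $2$.) Consequently each diagonal pick $b_k^{*}$ is a sum $\sum_{i\in G_k}b_i$ of elements of the original $B$, and you have not arranged the index sets $G_k$ to be pairwise disjoint. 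Without that, the containment $\textrm{FS}(B^{*})\subseteq\textrm{FS}(B)\subseteq\varphi(T)$ --- which is exactly what you invoke when you assert $\sum F\in\varphi(T)$ for finite $F\subseteq B^{*}$ --- does not follow: a sum $b_j^{*}+b_k^{*}$ may count some $b_i$ twice and need not lie in $\textrm{FS}(B)$ at all.

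The rest of your argument is sound: the dictionary ``$n\in\varphi(T)$ implies $E(n)$ is a chain contained in $T$'' (the only direction you use), the observation that the monochromatic residue modulo $2^{k+1}$ must be $0$, the block decomposition of supports, and the passage from an infinite chain in $T$ to a branch are all correct. The gap is repairable by standard bookkeeping --- at each stage pass to a sum subsystem built from blocks of $B$ lying strictly beyond all blocks already consumed by earlier picks, so that the $G_k$ are automatically disjoint --- but this needs to be said, since as written the construction can fail. Alternatively, you could simply adopt the paper's sparser range $\{2^{2n}:n\in\omega\}$ for $\alpha$, which makes the disjoint-support extraction elementary and removes the need for Hindman's theorem altogether.
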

\begin{proof}
First, we need an additional condition on the function $\alpha$, namely that $\alpha: \seqN\rightarrow\left\{2^{2n}: n\in\omega\right\}$.

Suppose that $\varphi(T)$ is an IP-set, i.e. $\textrm{FS}(B)\subseteq\varphi(T)$ for some $B\in\infomega$. Note that $B\subseteq \textrm{FS}(B)$ and, because of the definition of $\varphi$, $\varphi(T)$ contains only numbers which, in binary expansion, have "$1$" at even positions. From now on we will identify natural numbers with their binary expansions. Take any $a_0\in B$ and consider the set $B_0=B\setminus\{b\in B: b>a_0\}$. Take any $b_0\in B_0$. From the above remark, $a_0$ and $b_0$ have "$1$"s at even positions. Moreover, the positions of "$1$"s in $a_0$ are disjoint from the positions of "$1$"s in $b_0$ (otherwise $a_0+b_0\in \textrm{FS}(B)$ has $1$ at an odd position). Put $a_1=a_0+b_0$, consider the set $B_1=B\setminus\{b\in B: b>a_1\}$ and take any $b_1\in B_1$. With the same argument as before, $b_1$ and $a_1$ have "$1$"s at disjoint positions. So we put $a_2=a_1+b_1$ and go on with the construction.

As a result we obtain a sequence $(a_n)_{n\in\omega}$ of natural numbers satisfying
\begin{equation*}
    a_n\text{ has 1 at position $k$}\Rightarrow a_{n+1}\text{ has 1 at position $k$}
\end{equation*}
for every $n\in\omega$. This sequence induces a sequence $(2^{c_n})_{n\in\omega}$ of increasing powers of $2$ (by decomposing elements of $(a_n)_{n\in\omega}$ into sums of powers of $2$) contained in $\varphi(T)$. Now note that, by definition of $\varphi$ and condition (\ref{warunek_alfa}),
\begin{equation*}
    \alpha^{-1}(2^{c_n})\in T\;\land\;\alpha^{-1}(2^{c_n})\subseteq\alpha^{-1}(2^{c_{n+1}})
\end{equation*}
holds for every $n\in\omega$. Hence, $(\alpha^{-1}(2^{c_n}))_{n\in\omega}$ is an increasing sequence of elements of $T$, so $[T]\ne\emptyset$.
\end{proof}

\subsection{Modifications of the Hindman ideal}
\begin{definition}
For $k\ge 2$ define the function $\textrm{FS}_k:[\omega]^\omega\rightarrow[\omega]^\omega$ by
$$\textrm{FS}_k(B)=\left\{\sum_{n\in F}n:\;\; |F|=k\right\}.$$
A set $A\subseteq\omega$ is called an $\textrm{IP}_k$-set if there is an infinite $B\subseteq A$ satisfying $\textrm{FS}_k(B)\subseteq A$.
\end{definition}

From now on let us fix a natural number $k\ge 2$.

Analogously as before, $\mathcal{H}_k$ will denote a family of all non-$\IP_k$-sets, i.e. $\mathcal{H}_k=\mathcal{I}_{\textrm{FS}_k}$ 

\begin{fact}
    $\mathcal{H}_k$ forms an ideal.
\end{fact}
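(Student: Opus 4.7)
The plan is to invoke \cite[Proposition 3.3]{uogolnione_idealy}, recalled earlier in Section~2: it suffices to verify that $\textrm{FS}_k:[\omega]^\omega\to[\omega]^\omega$ is partition regular, because $\mathcal{H}_k=\mathcal{I}_{\textrm{FS}_k}$ is then automatically an ideal. I would take $\Lambda=\Omega=\omega$ and $\mathcal{F}=[\omega]^\omega$, noting that $\mathcal{F}$ is closed under removing finite sets as required by the setup of Section~2, and then check the three defining conditions in order.

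The monotonicity condition is immediate, since $E\subseteq F$ forces $[E]^k\subseteq[F]^k$ and hence $\textrm{FS}_k(E)\subseteq\textrm{FS}_k(F)$. The third (``finitary refinement'') condition I expect to hold with $E=F$: for any $a\in\textrm{FS}_k(F)$, all $k$-element subsets of $F$ summing to $a$ are contained in the finite set $F\cap[0,a]$, so there are only finitely many of them, say $G_1,\ldots,G_m$. Setting $K:=G_1\cup\cdots\cup G_m$ gives a finite subset of $\omega$ such that any witness to $a\in\textrm{FS}_k(F\setminus K)$ would have to equal some $G_j\subseteq K$ while being disjoint from $K$, which is impossible.

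The substantive step is the partition-regular condition: given $B\in[\omega]^\omega$ and a cover $\textrm{FS}_k(B)=A_0\cup A_1$, I would define a $2$-colouring $\chi:[B]^k\to\{0,1\}$ by $\chi(G):=\min\{i:\sum_{n\in G}n\in A_i\}$ and apply the infinite Ramsey theorem for $k$-element subsets to produce an infinite monochromatic $E\subseteq B$; then $\textrm{FS}_k(E)\subseteq A_{\chi(E)}$, which is exactly what is required (and in fact with $E\subseteq B$). This single appeal to Ramsey's theorem is the only non-trivial input, and I expect it to be the main (and really the only) obstacle in an otherwise routine verification.
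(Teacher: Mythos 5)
Your proof is correct, and its combinatorial core coincides with the paper's: the paper also colours $[C]^k$ according to which piece of the partition the sum $\sum_{n\in G}n$ falls into and extracts an infinite monochromatic subset via Ramsey's theorem. What differs is the wrapper. The paper verifies the two ideal axioms directly (subset-closure is immediate; union-closure is the Ramsey argument, run as a proof by contradiction with the two pieces taken disjoint), whereas you route the conclusion through \cite[Proposition 3.3]{uogolnione_idealy}, which obliges you to check the other two clauses of partition regularity as well. Your verifications of those are fine: monotonicity is trivial, and for the finitary-refinement clause taking $E=F$ and $K=F\cap[0,a]$ (equivalently, the union of the finitely many $k$-sets summing to $a$) works, since any witness to $a\in\textrm{FS}_k(F\setminus K)$ would be a nonempty set simultaneously contained in and disjoint from $K$. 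The trade-off is that your route establishes the slightly stronger, reusable fact that $\textrm{FS}_k$ is partition regular --- the property the whole framework of Section 2 is built on --- at the price of two extra (easy) verifications, while the paper's direct proof is shorter. One cosmetic point: $\chi(E)$ for infinite $E$ is an abuse of notation and should read ``the common colour of $[E]^k$''.
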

\begin{proof}
    Closure under taking subsets is clear. Let $A,B\subseteq\omega$ be non-$\IP_k$-sets (without loss of generality disjoint). Suppose $A\cup B$ is an $\IP_k$-set, i.e. there is $C\in\infomega$ such that $\textrm{FS}_k(C)\subseteq A\cup B$. Consider a function $c: [C]^k\rightarrow\{A,B\}$ defined with formula
    $$c(E)=\begin{cases}
            A,&\mbox{if } \sum_{n\in E}n\in A\\
            B,&\mbox{if } \sum_{n\in E}n\in B\\
            \end{cases}$$
    Using Ramsey's Theorem, there is an infinite $D\subseteq C$ such that all $k$-element subsets of $D$ are "colored" by $c$ with the same color, without loss of generality $A$. Thus, $\textrm{FS}_k(D)\subseteq A$, which gives us a contradiction.
\end{proof}

Obviously, every IP-set is $\IP_k$ set, so $\mathcal{H}_k$ is a subideal of $\mathcal{H}$. This way we obtain an infinite collection of ideals over $\omega$. We shall first focus on the structure of this collection.

\begin{fact}
    Let $n,m\in\omega$. Then
    $$n\mid m\iff \mathcal{H}_n\subseteq\mathcal{H}_m.$$
\end{fact}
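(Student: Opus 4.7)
The plan is to prove the two implications separately, both by direct constructions, working throughout with the identification $\mathcal{H}_k = \mathcal{I}_{\textrm{FS}_k}$ that the paper has been using in its other proofs.

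For the forward direction, I would write $m = kn$ and suppose $A \notin \mathcal{H}_m$, so that there exists an infinite $B = \{b_1 < b_2 < \cdots\}$ with $\textrm{FS}_m(B) \subseteq A$. The strategy is to compress $B$ into consecutive blocks of size $k$: set $G_i = \{b_{(i-1)k+1}, \ldots, b_{ik}\}$ and $s_i = \sum G_i$, and let $B' = \{s_i : i \in \omega\}$. Then $B'$ is infinite (the $s_i$ are strictly increasing), and every $n$-element sum of elements of $B'$ equals a sum of $nk = m$ elements of $B$ taken over $n$ disjoint blocks, hence lies in $\textrm{FS}_m(B) \subseteq A$. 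This gives $\textrm{FS}_n(B') \subseteq A$, so $A \notin \mathcal{H}_n$, and therefore $\mathcal{H}_n \subseteq \mathcal{H}_m$.

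For the backward direction I would argue the contrapositive: suppose $n \nmid m$, and exhibit a witness $A \in \mathcal{H}_n \setminus \mathcal{H}_m$. Setting $d = \gcd(m,n)$ gives $1 \leq d < n$, and my candidate is $A = \{x \in \omega : x \equiv d \pmod{n}\}$. To see $A \notin \mathcal{H}_m$, I observe that since $\gcd(m,n) = d$, the congruence $mc \equiv d \pmod{n}$ admits a solution $c_0$; then the arithmetic progression $B = \{c_0 + jn : j \in \omega\}$ has every $m$-element sum $\equiv mc_0 \equiv d \pmod{n}$, so $\textrm{FS}_m(B) \subseteq A$. To see $A \in \mathcal{H}_n$, I would assume toward contradiction that $\textrm{FS}_n(B) \subseteq A$ for some infinite $B$; a swap argument (replacing one element of an $n$-subset of $B$ with another element of $B$ and comparing the two sums modulo $n$) forces all elements of $B$ to share a single residue $c \pmod{n}$, whence every $n$-sum is $\equiv nc \equiv 0 \pmod{n}$, contradicting its membership in $A$ since $d \not\equiv 0 \pmod{n}$.

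The forward direction is a clean combinatorial trick with no real obstacle. The main delicate point will be balancing the two constraints on $d$ in the backward direction: $d$ must simultaneously be achievable as the $m$-sum residue of some infinite progression (which requires $\gcd(m,n) \mid d$) and fail to be the $n$-sum residue of any infinite set (which requires $d \not\equiv 0 \pmod{n}$). The choice $d = \gcd(m,n)$ fulfills both exactly when $n \nmid m$, which is the crux of the construction.
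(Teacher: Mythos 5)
Your proof is correct, but both implications use constructions that genuinely differ from the paper's. For the forward direction the paper does not compress $B$ into blocks: it extracts an infinite subset of $B$ lying in a single residue class modulo $n$, sums $n(k-1)$ of its elements to obtain a multiple $nb$ of $n$, and then translates the rest of $B$ by $b$, so that every $n$-sum of the translated set is an $m$-sum of $B$. Your block-sum compression $s_i=\sum G_i$ achieves the same reduction more directly: no residue class and no divisibility step are needed, only the observation that $n$ disjoint $k$-blocks contribute $m=nk$ distinct elements of $B$. For the backward direction the paper takes the simpler witness $A=\{x\in\omega: n\nmid x\}$, notes $\textrm{FS}_m(\{kn+1:k\in\omega\})\subseteq A$ because $n\nmid m$, and rules out $A$ being an $\IP_n$-set by pigeonhole: any infinite $B$ contains $n$ elements in one residue class modulo $n$, and their sum is divisible by $n$, hence outside $A$. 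Your single-residue-class witness $A=\{x: x\equiv\gcd(m,n)\pmod{n}\}$ also works, but the $\gcd$ machinery and the swap argument are more than is required; the same pigeonhole argument already defeats your $A$, since an $n$-sum of congruent elements is $\equiv 0\not\equiv d\pmod{n}$, so you could delete the swap step entirely. In short, your version has the cleaner direct implication, the paper's has the cleaner counterexample, and both are complete and correct.
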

\begin{proof}
    Assume $n\mid m$. Take $k\in\omega$ such that $m=nk$. Suppose $A$ is an $\IP_m$-set, i.e. there is an infinite set $B\subseteq\omega$ satisfying $\textrm{FS}_m(B)\subseteq A$. Since $B$ is infinite, there is $p\in\omega$ such that the set $$B_p=\{a\in B: a\equiv p\mod n\}$$ is infinite. Fix $a_0,a_1,\ldots,a_{n(k-1)-1}\in B_p$. Note that $n\mid\sum_{i<n(k-1)}a_i$, so
    $b=\frac{1}{n}\sum_{i<n(k-1)}a_i$ is a natural number.
    Now, consider the sets $B'=B\backslash\{a_0,a_1,\ldots,a_{n(k-1)-1}\}$ and $B''=B'+b=\{c+b: c\in B'\}$. Take any $b_0,b_1,\ldots,b_{n-1}\in B''$. There are $c_0,c_1,\ldots,c_{n-1}\in B'$ such that $b_i=c_i+b$ for every $i<n$. Then
    $$\sum_{i<n}b_i=\sum_{i<n}(c_i+b)=bn+\sum_{i<n}c_i=\sum_{j<n(k-1)}a_j+\sum_{i<n}c_i\in FS_m(B)$$
    as a sum of $n(k-1)+n=nk=m$ elements from the set $B$. Therefore, $\textrm{FS}_n(B'')\subseteq \textrm{FS}_m(B)\subseteq A$, so $A$ is an $\IP_n$-set.

    Assume now that $n\nmid m$. Consider $A=\{k\in\omega: n\nmid k\}$. Observe that
    $$\textrm{FS}_m(\{kn+1: k\in\omega\})\subseteq A,$$
    so $A$ is an $\IP_m$-set. However, it is not an $\IP_n$-set. Suppose it is. There is an infinite $B\subseteq\omega$ such that $\textrm{FS}_n(B)\subseteq A$. Since $B$ is infinite, we can (like before) find $a_0,a_1,\ldots,a_{n-1}$ such that $a_i\equiv a_j\mod n$ for each $i,j<n$. But then $\sum_{i<n}a_i\in FS_n(B)\subseteq A$ is divisible by $n$, which yields a contradiction.
\end{proof}
\begin{corollary}
    For each $k\in\omega$, $\mathcal{H}_k$ is a proper subideal of $\mathcal{H}$.
\end{corollary}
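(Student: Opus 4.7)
The plan is to exploit the hierarchy established in the preceding fact, together with the trivial observation that every IP-set is automatically an $\IP_k$-set. The containment $\mathcal{H}_k \subseteq \mathcal{H}$ is the easy direction noted in the paragraph above the corollary: if $A$ is an IP-set witnessed by $B$, then $\textrm{FS}_k(B) \subseteq \textrm{FS}(B) \subseteq A$, so $A$ is also an $\IP_k$-set; contrapositively $\mathcal{H}_k \subseteq \mathcal{H}$.

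For strictness, I would invoke the preceding fact with $n = 2k$ and $m = k$. Since $k \ge 2$, we have $2k \nmid k$, so by that fact $\mathcal{H}_{2k} \not\subseteq \mathcal{H}_k$. An explicit witness, essentially read off the proof of the preceding fact, is $A = \{x \in \omega : 2k \nmid x\}$. On the one hand, the set $\{l \cdot 2k + 1 : l \in \omega\}$ witnesses that $A$ is an $\IP_k$-set, because any sum of $k$ of its elements has the form $k(2\sum l_i + 1)$, which is divisible by $k$ but not by $2k$. On the other hand, $A$ is not an $\IP_{2k}$-set: any hypothetical witness would, by pigeonhole, contain $2k$ elements in a single residue class modulo $2k$, whose sum is $\equiv 0 \pmod{2k}$, contradicting its membership in $A$.

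To close the argument, I would apply the same trivial observation again, this time to $2k$ in place of $k$: every IP-set is an $\IP_{2k}$-set. Hence $A$, which fails to be $\IP_{2k}$, cannot be an IP-set, so $A \in \mathcal{H}$; combined with $A \notin \mathcal{H}_k$, this places $A$ in $\mathcal{H} \setminus \mathcal{H}_k$ and yields the strictness. I do not anticipate any real obstacle here: the corollary is essentially bookkeeping on top of the preceding fact and the chain $\mathcal{H}_k \subseteq \mathcal{H}_{2k} \subseteq \mathcal{H}$, where the second inclusion is strict by the same modular pigeonhole argument.
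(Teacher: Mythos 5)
Your proof is correct and matches the paper's intent: the corollary is stated without proof as an immediate consequence of the preceding fact, and your witness $A=\{x\in\omega: 2k\nmid x\}$ (an $\IP_k$-set that is not an $\IP_{2k}$-set, hence not an IP-set) is exactly the set appearing in that fact's proof, specialized to $n=2k$, $m=k$. One small slip in your closing remark: in the chain $\mathcal{H}_k\subseteq\mathcal{H}_{2k}\subseteq\mathcal{H}$ your witness $A$ lies in $\mathcal{H}_{2k}\setminus\mathcal{H}_k$, so it is the \emph{first} inclusion it shows to be strict, not the second --- but this does not matter, since your main argument already places $A$ in $\mathcal{H}\setminus\mathcal{H}_k$ directly.
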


It is also rather easy to see that $\textrm{FS}_k$ is continuous for every $k\in\omega$ (similarly to continuity of $\textrm{FS}$, which can be found in \cite[Proposition 5.2.(2)]{uogolnione_idealy}). Hence, from \cite[Proposition 5.1]{uogolnione_idealy}, $\mathcal{H}_k$ is coanalytic for every $k\in\omega$.

\begin{theorem}
    $\mathcal{H}_2$ is $\coanalytic$-complete.
\end{theorem}
\begin{proof}
    Take $\alpha$ like in the proof of \Cref{hindman_complete}.

    Suppose now that $\varphi(T)$ is an $\IP_2$-set, i.e. $\textrm{FS}_2(B)\subseteq\varphi(T)$ for some $B\in\infomega$. From now on, we identify natural numbers with their binary expansions and $|n|$ denotes the length of $n$ in this expansion. For any natural number $n$, $n=\sum_{i<|n|}n_i2^i$. Note that $\varphi(T)$ contains only numbers which have exactly two "$1$"s. Moreover, these "$1$"s have to be at even positions.

    We claim that $B\cap \{2^k: k\in\omega\}$ is infinite. Suppose it is not, so without loss of generality $B$ does not contain any power of $2$. Consider the function $f_1(n)=\min\{i: n_i=1\}$. If $f_1[B]$ is infinite, take any $a\in B$. Choose $b\in B$ such that $f_1(b)>2|a|$. But then $a+b\in\textrm{FS}_2(B)$ and $a+b$ has more than two "$1$"s in binary expansion. Hence, $f_1[B]$ is finite, so without loss of generality $f_1[B]=\{k_1\}$. Furthermore, all elements of $B$ have the same number at position $k_1+1$. If this number is $1$, we get a contradiction since
    $$(\ldots01\underset{k_1}{1}\ldots)_2+(\ldots01\underset{k_1}{1}\ldots)_2=(\ldots11\underset{k_1}{0}\ldots)_2,$$
    $$(\ldots11\underset{k_1}{1}\ldots)_2+(\ldots11\underset{k_1}{1}\ldots)_2=(\ldots11\underset{k_1}{0}\ldots)_2,$$
    so the sum of elements of $B$ has "$1$"s in adjacent positions (we can once again assume without loss of generality that all elements of $B$ have the same number at position $k_1+2$ and are bigger than $2^{k_1+3}$). Thus, all elements of $B$ have $0$ at position $k_1+1$. By repeating the above argument for $f_2=\min\{i>k_1: n_i=1\}$, all elements of $B$ have $1$ at some position $k_2>k_1$ and $0$ at position $k_2+1$ and are bigger than $2^{k_2+3}$. But
    $$(1\ldots0\underset{k_2}{1}\ldots0\underset{k_1}{1}\ldots)_2+(1\ldots0\underset{k_2}{1}\ldots0\underset{k_1}{1}\ldots)_2=(1\ldots1\underset{k_2}{0}\ldots1\underset{k_1}{0}\ldots)_2,$$
    which again leads to a contradiction as elements of $B$ sum up to a number having more than two "$1$"s.

    Define $B'=B\cap \{2^{2k}: k\in\omega\}$. Then $\textrm{FS}_2(B')\subseteq\textrm{FS}_2(B)\subseteq\varphi(T)$. Take any $a,b\in B'$, $a<b$. $a+b\in\varphi(T)$, so (according to the definition of $\varphi$) there are $\sigma\in T$ and $k,l<|\sigma|$ such that $\alpha^{-1}(a)=\sigma\upharpoonright k$ and $\alpha^{-1}(b)=\sigma\upharpoonright l$. Therefore (from condition (\ref{warunek_alfa})) $\alpha^{-1}(a)\subset\alpha^{-1}(b)$ and $\alpha^{-1}(a),\alpha^{-1}(b)\in T$. Using this argument we can find a sequence $(a_n)_{n\in\omega}$ of elements of $B'$ such that
    $$i<j\implies \alpha^{-1}(a_i)\subset\alpha^{-1}(a_j)$$
    and $\alpha^{-1}(a_i)\in T$ for every $i\in\omega$. That generates an infinite branch through $T$.
\end{proof}

\begin{conjecture}
    $\mathcal{H}_k$ is $\coanalytic$-complete for every $k\ge 2$.
\end{conjecture}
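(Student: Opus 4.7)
The plan is to extend the unified reduction already used for $\mathcal{R}$, $\mathcal{H}$ and $\mathcal{H}_2$. I would fix an injection $\alpha:\seqN\to\{2^{Nn}:n\in\omega\}$ satisfying (\ref{warunek_alfa}), where $N$ is a parameter depending on $k$ (chosen at least $\lceil\log_2 k\rceil+2$ so that any carry generated by summing $k$ values cannot escape a block of $N$ consecutive bit positions). Defining $\varphi$ from this $\alpha$ exactly as in the unified construction, every element of $\varphi(T)$ is a sum of $k$ distinct powers $2^{Nn_j}$, so its binary expansion has exactly $k$ ones, all at positions in $N\omega$. The direction $[T]\ne\emptyset\Rightarrow\varphi(T)\in\mathcal{H}_k^c$ is immediate from (\ref{extension_condition}); the content of the proof is the converse.

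Assume $\textrm{FS}_k(B)\subseteq\varphi(T)$ for some $B\in\infomega$. The strategy is to produce an infinite subset $B'=\{2^{Nm_j}:j\in\omega\}\subseteq B\cap\range(\alpha)$, after which the construction of an infinite branch proceeds exactly as at the end of \Cref{hindman_complete}: any $k$ elements of $B'$ yield a $\sigma\in T$ whose initial segments include the $k$ values $\alpha^{-1}(2^{Nm_{j_i}})$, these preimages are nested by (\ref{warunek_alfa}), and sliding the $k$-window along $B'$ produces an increasing chain in $T$, hence an infinite branch.

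For the extraction of $B'$, the easy half is when $L[B]=\{L(b):b\in B\}$ is unbounded, where $L(b)$ denotes the lowest $1$-position of $b$'s binary expansion. Then I would pick $b_1\in B$ arbitrarily and greedily select $b_{j+1}\in B$ with $L(b_{j+1})$ strictly larger than the highest $1$-position of $b_j$; any $k$ elements chosen this way have pairwise disjoint sets of $1$-positions, so their sum suffers no carries and contains $\sum_i r(b_{j_i})$ ones, where $r(\cdot)$ counts $1$'s in binary. Since the sum lies in $\varphi(T)$ and so has exactly $k$ ones at positions in $N\omega$, each $b_{j_i}$ must be of the form $2^{Nm}$, and iterating produces the desired $B'$.

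The main obstacle is the complementary case, when $L[B]$ is bounded. By pigeonhole I may replace $B$ with an infinite subset on which every $b$ has the same lowest $1$-position $\ell$, so that every $k$-sum starts with $k$ contributions piled at position $\ell$ and a nontrivial cascade of carries. Here I would extend the two-step parity argument at the end of \Cref{hindman_complete}: recursively pass to an infinite subset on which the bit of $b$ at $\ell+1$, $\ell+2$, $\ldots$ is fixed, and exploit the rigid constraint that every $k$-sum has its $k$ ones only at positions in $N\omega$ to either reduce to the previous case or reach an outright contradiction. Controlling the combinatorics of $k$-fold carries uniformly in $k$, and in particular tracking how the initial pile of $k$ contributions at position $\ell$ propagates through successive blocks of $N$ bit positions, is the crux of the problem and presumably the reason the statement is left as a conjecture.
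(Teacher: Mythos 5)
The statement you are trying to prove is not proved in the paper at all: it is stated as a conjecture, and the authors only establish the case $k=2$ (and the full Hindman ideal $\mathcal{H}$) by the unified reduction. So there is no proof of record to compare against, and the relevant question is whether your sketch closes the gap the authors left open. It does not.

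Your setup is sound and matches the paper's method: choosing $\alpha:\seqN\to\{2^{Nn}:n\in\omega\}$ with $N$ large enough to contain carries, the forward direction follows from condition (\ref{extension_condition}), and your ``easy half'' is essentially correct --- if the lowest $1$-positions of elements of $B$ are unbounded, a greedy thinning gives $k$-sums with no carries, the exact-$k$-ones constraint forces each selected element to be a single power $2^{Nm}$, and the sliding-window argument then produces an increasing chain in $T$ exactly as at the end of Theorem \ref{hindman_complete}. But the complementary case, where infinitely many elements of $B$ share a common lowest $1$-position $\ell$, is only a plan, not an argument. For $k=2$ the paper needs a delicate multi-step analysis of how the single carry out of position $\ell$ interacts with the bits at $\ell+1$, $\ell+2$, and a second common $1$-position $k_2$; for general $k$ the initial pile of $k$ ones at position $\ell$ produces a carry of size $\lfloor k/2\rfloor$ that cascades through subsequent positions in a way that depends on the bits of all $k$ summands simultaneously, and it is not established that the constraint ``every $k$-sum has exactly $k$ ones, all at positions in $N\omega$'' forces a contradiction or a reduction to the unbounded case. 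You acknowledge this yourself: the sentence beginning ``Controlling the combinatorics of $k$-fold carries\dots'' is an admission that the crucial step is missing. As it stands, the proposal is a reasonable reduction of the conjecture to a concrete combinatorial claim about $k$-fold binary carries, but that claim is exactly the open content of the conjecture.
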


Let us give some intuition why the proof presented above does not directly translate to all $k$ \footnote{While the paper was reviewed, Jacek Tryba e-mailed us a proof of the conjecture for any $k\geq 2$.}. There are two possible ways: we either stick to the binary expansion of numbers (maybe increasing gaps between "$1$"s) or we consider the $k$-adic expansion. Either way, it is not clear how to achieve a contradiction similar to
$$(\ldots01\underset{k_1}{1}\ldots)_2+(\ldots01\underset{k_1}{1}\ldots)_2=(\ldots11\underset{k_1}{0}\ldots)_2,$$
    $$(\ldots11\underset{k_1}{1}\ldots)_2+(\ldots11\underset{k_1}{1}\ldots)_2=(\ldots11\underset{k_1}{0}\ldots)_2.$$
When we keep the binary expansion, there is no control over how close the resulting "$1$"s are to each other (it is quite easy to see in case $k=3$). When we change to $k$-adic expansion, there are more possibilities to consider on every position.

\subsection{Differences instead of sums}
\begin{definition}
Define $\Delta: [\omega]^\omega\rightarrow[\omega]^\omega$ by
$$\Delta(B)=\left\{a-b: a,b\in B\land a>b\right\}.$$
A set $A\subseteq\omega$ is called a \textit{D-set} if there is an infinite $B\subseteq\omega$ satisfying $\Delta(B)\subseteq A$.
\end{definition}

Let $\mathcal{D}$ denote the family of all non-D-sets. As proved in \cite{hindmann_d_idealy}, $\mathcal{D}$ is a proper subideal of $\mathcal{H}$. Moreover, it is coanalytic (again due to \cite[Proposition 5.2.(5)]{uogolnione_idealy}).

\begin{theorem}
    $\mathcal{D}$ is $\coanalytic$-complete.
\end{theorem}
\begin{proof}
    This time we assume that $\alpha: \seqN\rightarrow\{2^n: n\in\omega\}$.

    Suppose now that $\Delta(B)\subseteq\varphi(T)$ for some infinite $B\subseteq\omega$. Take $n_0=\min\{k: (\exists n\in\omega)(2^n-2^k\in \Delta(B))\}$. Without loss of generality we can assume that $2^{n_0}\in B$ (otherwise if we consider $b=\min B$ and $B'=\{a+(2^{n_0}-b): a\in B\}$, $\Delta(B)=\Delta(B')$). Observe that now all elements of the set $B$ are powers of $2$ (because only differences of powers of $2$ are in $\varphi(T)$). Take any $2^{n_1}\in B$. $2^{n_1}-2^{n_0}\in \Delta(B)$, so (from the definition of $\varphi$ and condition (\ref{warunek_alfa})) $\alpha^{-1}(2^{n_0})\subseteq\alpha^{-1}(2^{n_1})$ and $\alpha^{-1}(2^{n_1})\in T$. Now take any $2^{n_2}\in B$. Analogously as above, $2^{n_2}-2^{n_1}\in \Delta(B)$, hence $\alpha^{-1}(2^{n_1})\subseteq\alpha^{-1}(2^{n_2})$ and $\alpha^{-1}(2^{n_2})\in T$.
    
    Repeating this argument we obtain a sequence $(n_k)_{k\in\omega}$ such that
    $$(\forall k\in\omega)(\alpha^{-1}(2^{n_k})\subseteq\alpha^{-1}(2^{n_{k+1}})\land\alpha^{-1}(2^{n_k})\in T).$$
    Therefore, $(\alpha^{-1}(2^{n_k}))_{k\in\omega}$ is an increasing sequence of elements of $T$, so $[T]\ne\emptyset$.
\end{proof}

    \section{Trees and ideals}

As noted in the introduction, a tree over a countable set $A$ can be seen as a point in the space $\mathcal{P}(A^{<\omega})$, which is homeomorphic to $\mathcal{P}(\omega)$. Therefore, families of trees, as subsets of this space, have similar topological structure to ideals on $\omega$. In the second part of this work we will investigate the descriptive complexity of classical families of trees, like Miller or Laver trees. Let us start with a theorem connecting this topic with earlier results.
\begin{theorem}\label{mathias_trees}
    Consider the function $M: \mathcal{P}(\omega)\rightarrow\mathcal{P}(\seqN)$ given by
    $$M(B)=\{\sigma\in B^{<\omega}: \sigma\text{ is increasing}\}.$$
    $\mathcal{I}_M$ is $\coanalytic$-complete.
\end{theorem}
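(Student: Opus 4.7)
The plan is to apply the reduction scheme developed in the subsection on the unified approach with $\rho=M$, in close analogy with the proof for the Ramsey ideal. First I would verify that $M$ admits the required extension to finite sets: take $\mathcal{G}=[\omega]^{<\omega}$ and let $\lambda(G)$ be the strictly increasing enumeration of $G$. Then condition (\ref{extension_condition}) holds because every increasing finite sequence $a\in M(F)$ equals $\lambda(G)$ for $G=\range(a)\subseteq F$. Continuity of $M$ is immediate (membership $\sigma\in M(B)$ depends only on whether the finitely many values appearing in $\sigma$ lie in $B$), so $\mathcal{I}_M$ is coanalytic by \cite[Proposition 5.1]{uogolnione_idealy}. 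Fix $\alpha\colon\seqN\to\omega$ satisfying (\ref{warunek_alfa}) and let $\varphi$ denote the Borel reduction from the unified approach. As already observed there, any branch $\tau\in[T]$ yields the infinite set $F=\{\alpha(\tau\upharpoonright k):k\in\omega\}$ with $M(F)\subseteq\varphi(T)$, so $\varphi(T)\in\mathcal{I}_M^c$ whenever $T\in\textrm{IF}_\omega$.

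The remaining, crucial direction is to show that if $M(B)\subseteq\varphi(T)$ for some infinite $B\subseteq\omega$, then $[T]\neq\emptyset$. Enumerate $B$ in increasing order as $b_0<b_1<\cdots$. For each pair of indices $i<j$, the length-$2$ increasing sequence $(b_i,b_j)$ lies in $M(B)\subseteq\varphi(T)$, so by definition of $\varphi$ there exist $\sigma\in T$ and $k,l<|\sigma|$ with $\alpha(\sigma\upharpoonright k)=b_i$ and $\alpha(\sigma\upharpoonright l)=b_j$. Since $\sigma\upharpoonright k$ and $\sigma\upharpoonright l$ are both initial segments of the common node $\sigma$, they are $\subseteq$-comparable; combining $b_i<b_j$ with (\ref{warunek_alfa}) forces $\alpha^{-1}(b_i)\subsetneq\alpha^{-1}(b_j)$, and both lie in $T$ as $T$ is downward closed. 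Hence $(\alpha^{-1}(b_n))_{n\in\omega}$ is an increasing $\subseteq$-chain in $T$, yielding an infinite branch and showing $T\in\textrm{IF}_\omega$.

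Together with \Cref{analityczny_z_analitycznego}, this establishes $\coanalytic$-completeness of $\mathcal{I}_M$. I do not anticipate a genuine obstacle: the argument is essentially the Ramsey ideal proof, since the $2$-element increasing sequences inside $M(B)$ already encode $[B]^2$, and this is all that is needed in order to extract the chain. The higher-arity increasing sequences present in $M(B)$ are redundant for the reduction, which is the sense in which the $M$-case is no harder than Ramsey.
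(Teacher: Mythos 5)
Your proof is correct and takes essentially the same route as the paper: both arguments extract a $\subseteq$-chain in $T$ from length-$2$ increasing sequences via condition (\ref{warunek_alfa}), the only cosmetic difference being that the paper packages the key step as the slightly stronger fact $\varphi^{-1}[\textrm{IF}_\omega]=\textrm{IF}_\omega$ (working with a branch $\tau$ of the treefied $\varphi(T)$ rather than directly with the increasing enumeration of $B$), which it then reuses for the subsequent corollaries on Mathias, Miller and Laver trees. Your added verifications of condition (\ref{extension_condition}) and of the coanalyticity of $\mathcal{I}_M$ correctly fill in details the paper leaves implicit.
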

\begin{proof}
    First, notice that $\range\varphi\subseteq\textrm{Tree}_\omega$. Therefore, since $\varphi[\textrm{IF}_\omega]\subseteq\mathcal{I}_M^+$, it is sufficient to show that $\varphi^{-1}[\textrm{IF}_\omega]=\textrm{IF}_\omega$.

    Suppose that $\tau\in[\varphi(T)]$. Take any $k\in\omega$. By definition, $\tau\upharpoonright (k+2)\in\varphi(T)$. Hence, there is $\sigma\in T$ such that $\alpha(\sigma\upharpoonright k_1)=\tau(k)$ and $\alpha(\sigma\upharpoonright k_2)=\tau(k+1)$ for some $k_1,k_2\in\omega$. But $\tau(k+1)>\tau(k)$ (since $\tau\upharpoonright (k+2)\in\varphi(T)$ is increasing), so because of condition (\ref{warunek_alfa}) $k_1<k_2$. Therefore, $\alpha^{-1}(\tau(k))\subset\alpha^{-1}(\tau(k+1))$, so $(\alpha^{-1}(\tau(k)))_{k\in\omega}$ is an increasing sequence of elements of $T$, so $[T]\ne\emptyset$.
\end{proof}

Clearly, $\mathcal{I}_M$ is not a family of trees ($A\in\mathcal{I}_M$ does not necessarily have to be closed under taking subsequences). However, since $\range\varphi\subseteq\textrm{Tree}_\omega$, the proof of \Cref{mathias_trees} can be easily repeated for trees containing $M(B)$ for some $B\in[\omega]^\omega$ instead of $\mathcal{I}_M^+$ (their preimages via $\varphi$ are the same).
\begin{corollary}
    The family of all trees containing $M(B)$ for some $B\in[\omega]^\omega$ is $\analytic$-complete.
\end{corollary}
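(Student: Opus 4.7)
The plan is to post-compose the reduction from the proof of \Cref{mathias_trees} with $\psi$ and verify that the result is a Borel reduction of $\textrm{IF}_\omega$ into the set $\mathcal{M}:=\{T\in\textrm{Tree}_\omega : \exists B\in[\omega]^\omega\; M(B)\subseteq T\}$. First, I note that $\mathcal{M}$ is analytic, being the projection onto the first coordinate of the (closed) set $\{(T,B)\in\textrm{Tree}_\omega\times[\omega]^\omega : M(B)\subseteq T\}$.

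The key observation is that the map $\varphi$ used in \Cref{mathias_trees} already takes values in $\textrm{Tree}_\omega$. Indeed, each $\tau\in\varphi(T)$ is an increasing finite sequence drawn from some finite set $\{\alpha(\sigma\upharpoonright k):k<|\sigma|\}$ with $\sigma\in T$, and any initial segment of such a $\tau$ is still an increasing sequence from the same finite set, hence still in $\varphi(T)$. Consequently $\varphi(T)$ is closed under initial segments, so $\psi(\varphi(T))=\varphi(T)$.

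With that in hand, the upgrade to trees is immediate. The composition $\psi\circ\varphi:\textrm{Tree}_\omega\to\textrm{Tree}_\omega$ is Borel (in fact continuous), and \Cref{mathias_trees} gives $\varphi(T)\in\mathcal{I}_M^c$ iff $T\in\textrm{IF}_\omega$. Since $\varphi(T)$ is a tree, the condition "some $M(B)$ is contained in $\varphi(T)$" is exactly membership in $\mathcal{M}$. Thus $(\psi\circ\varphi)^{-1}(\mathcal{M})=\textrm{IF}_\omega$, so $\textrm{IF}_\omega$ Borel-reduces to the analytic set $\mathcal{M}$, and by \Cref{analityczny_z_analitycznego} the family $\mathcal{M}$ is $\analytic$-complete.

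The only real subtlety is the observation that $\varphi(T)$ is a tree. Without it one might try to prove $\psi^{-1}(\mathcal{M})=\mathcal{I}_M^c$ directly, but this fails in general: a set $A\in\mathcal{I}_M$ can have a downward closure that nonetheless contains some $M(B)$ (e.g.\ if every maximal element of $A$ is an increasing sequence followed by a single decreasing step). The tree-property of $\varphi(T)$ collapses this potential gap, so the argument goes through without further effort.
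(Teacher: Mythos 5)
Your proposal is correct and follows the paper's route: the paper likewise obtains the corollary by composing the reduction $\varphi$ from \Cref{mathias_trees} with the continuous treefication $\psi$. Your explicit check that $\varphi(T)$ is already closed under initial segments (so $\psi\circ\varphi=\varphi$ and the backward implication survives treefication) is a worthwhile elaboration of the paper's one-line remark that completeness is preserved by continuous maps, and your observation that $\psi^{-1}[\mathcal{M}]=\mathcal{I}_M^c$ can fail for arbitrary sets correctly identifies why that check is needed.
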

\begin{definition}
    We call a tree of the form $M(B)$ for some $B\in[\omega]^\omega$ a \textit{Mathias bush}.

    Moreover, if $T_\sigma=\{\tau\in\seqN: \max(\sigma)<\min(\tau)\land\sigma\concat\tau\in T\}$ is a Mathias bush for some $\sigma\in T$, we call $T$ a \textit{Mathias tree}.
\end{definition}
The idea of the above definitions comes from Mathias forcing. Mathias forcing consists of pairs $(s, S)$, where $s\in\seqN$ is increasing and $S\in[\omega]^\omega$ such that $\max(\textrm{rng}(s))<\min(S)$. The ordering of pairs is defined as follows:
$$(s,S)\geq(t,T)\iff s\subseteq t \land S\supseteq T\land \textrm{rng}(t)\backslash\textrm{rng}(s)\subseteq S\backslash T.$$
Hence, if we fix a pair $(s,S)$, we can think of a Mathias tree as a tree with stem $s$ and all possible $t\in\seqN$ such that $(s,S)\geq(t,T)$ for some $T\in[\omega]^\omega$.

Now note that in the proof of \Cref{mathias_trees}, having a stem or not does not make any difference. Therefore, we can conclude the following result.
\begin{corollary}
    The family of all trees containing a Mathias tree is $\analytic$-complete.
\end{corollary}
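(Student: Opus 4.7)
My plan is to reuse the Borel reduction $T\mapsto\psi(\varphi(T))$ already deployed in the previous corollary, observing that it automatically handles Mathias trees once stems are allowed. The ``if'' direction will be free from the previous corollary: when $T\in\textrm{IF}_\omega$, $\psi(\varphi(T))$ already contains a Mathias bush $M(B)$, which qualifies as a Mathias tree with empty stem since $M(B)_{\emptyset}=M(B)$.

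For the ``only if'' direction, I would first fix a Mathias tree inside $\psi(\varphi(T))$ --- say, witnessed by a stem $\sigma_0\in\psi(\varphi(T))$ and an infinite $B\subseteq\omega$ with $M(B)\subseteq\psi(\varphi(T))_{\sigma_0}$ --- pull out an infinite branch $\tau\in[\psi(\varphi(T))]$ by concatenating $\sigma_0$ with the increasing enumeration of $B$, and then replay the argument from \Cref{mathias_trees} on this $\tau$. Concretely, for each $k$ the initial segment $\tau\upharpoonright(k+2)$ extends to some $\nu_k\in\varphi(T)$; by construction of $\varphi$, $\nu_k$ lies in $M(\{\alpha(\sigma^*\upharpoonright j):j<|\sigma^*|\})$ for some $\sigma^*\in T$, so $\tau(k)<\tau(k+1)$ are both $\alpha$-images of initial segments of $\sigma^*$, and condition (\ref{warunek_alfa}) combined with injectivity of $\alpha$ forces $\alpha^{-1}(\tau(k))\subsetneq\alpha^{-1}(\tau(k+1))$ in $T$. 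Thus $(\alpha^{-1}(\tau(k)))_{k\in\omega}$ is an infinite branch of $T$.

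Analyticity of the target family follows from its existential description over $\sigma\in T$ and $B\in[\omega]^\omega$, so the Borel map $\psi\circ\varphi$ completes the reduction. The only mild obstacle relative to the previous corollary is the bookkeeping around the stem $\sigma_0$ and the fact that branches of $\psi(\varphi(T))$ need not a priori sit inside $\varphi(T)$ itself; but --- exactly as anticipated in the remark preceding the statement --- every initial segment of such a branch extends into $\varphi(T)$, which is all the \Cref{mathias_trees} argument needs, and the stem is simply absorbed into the branch.
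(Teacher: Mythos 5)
Your proposal is correct and follows exactly the route the paper intends: the paper disposes of this corollary with the single remark that ``having stem or not does not make any difference'' in the proof of \Cref{mathias_trees}, and your argument is precisely the fleshed-out version of that remark --- same reduction $\psi\circ\varphi$, same branch-extraction through condition (\ref{warunek_alfa}), with the stem absorbed into the branch. Your extra care about initial segments of a branch of $\psi(\varphi(T))$ merely extending into $\varphi(T)$ is sound (in fact $\varphi(T)$ is already closed under initial segments here, so $\psi(\varphi(T))\subseteq\varphi(T)$), so there is nothing to add.
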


Recall that every tree over $\omega$ codes a closed subset of $\baire$ (via the body of the tree). Moreover, a closed set contains a body of a Mathias tree if and only if it is not Ramsey-null (\cite[Section 4.7.7.]{zapletal}). Thus, we can formulate the above result in terms of codes of closed subsets of the Baire space belonging to the $\sigma$-ideal of Ramsey-null sets.

\begin{corollary}
    The set of codes for closed Ramsey-null sets is $\coanalytic$-complete.
\end{corollary}

That result corresponds to the one obtained in \cite{sabok}, where it is proved that the set of codes for analytic Ramsey-null sets is $\mathbf{\Pi}_2^1$-complete.

\begin{definition}
    A tree $T\in\textrm{Tree}_\omega$ is called \textit{Miller} if
    $$(\forall \sigma\in T)(\exists\tau\supseteq\sigma)(\exists^\infty n\in\omega)(\tau\concat n\in T).$$
    A tree $T\in\textrm{Tree}_\omega$ is called \textit{Laver} if
    $$(\exists \sigma\in T)(\forall\tau\supseteq\sigma)(\exists^\infty n\in\omega)(\tau\concat n\in T).$$
\end{definition}

Clearly, all Mathias trees are both Miller and Laver. Hence, repeating the argumentation used for Mathias trees, in the proof of \Cref{mathias_trees} we can put the family of all trees containing a Miller (or Laver) tree in place of $\mathcal{I}_M^+$ (since $\varphi[\textrm{IF}_\omega]\subseteq\mathcal{I}_M^+$ and $\varphi^{-1}[\textrm{IF}_\omega]=\textrm{IF}_\omega$, the preimage of any Miller tree which is not Mathias via $\varphi$ is empty). This will give us
\begin{corollary}
    \begin{enumerate}
        \item The family of all trees containing a Miller tree is $\analytic$-complete.
        \item The family of all trees containing a Laver tree is $\analytic$-complete.
    \end{enumerate}
\end{corollary}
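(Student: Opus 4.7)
The plan is to reuse the Borel reduction $\psi \circ \varphi : \textrm{Tree}_\omega \to \textrm{Tree}_\omega$ already built for the Mathias-tree corollary, combined with two very simple observations. The first is that every Mathias bush $M(B)$ is simultaneously a Miller tree---for each $\tau \in M(B)$ the set $\{n \in B : n > \max \tau\}$ is infinite, so $\tau$ has infinitely many immediate successors in $M(B)$---and a Laver tree (with stem $\emptyset$). The second is that every Miller and every Laver tree is by definition ill-founded, which is exactly the conclusion the Mathias argument already delivers.

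I would then verify both directions of the reduction. For the forward direction, if $T \in \textrm{IF}_\omega$, the proof of \Cref{mathias_trees} gives $\psi(\varphi(T)) \supseteq M(B)$ for some $B \in [\omega]^\omega$, and by the first observation $M(B)$ is itself a Miller (resp.\ Laver) subtree of $\psi(\varphi(T))$. For the converse, supposing $\psi(\varphi(T))$ contains a Miller or Laver tree $S$, I would pick any $\tau \in [S] \subseteq [\psi(\varphi(T))]$ (which exists by the second observation) and rerun the argument of \Cref{mathias_trees} on this $\tau$: each $\tau \upharpoonright (k+2) \in \psi(\varphi(T))$ is a proper initial segment of some element of $\varphi(T)$, which by definition lies in $M(\{\alpha(\sigma \upharpoonright i) : i < |\sigma|\})$ for a single $\sigma \in T$, and condition~(\ref{warunek_alfa}) then forces $\alpha^{-1}(\tau(k)) \subsetneq \alpha^{-1}(\tau(k+1))$, producing an infinite branch in $T$.

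Analyticity of both families is immediate from the existential form of their definitions, since being Miller or being Laver is a Borel condition on $\textrm{Tree}_\omega$. Combined with the reduction above, \Cref{analityczny_z_analitycznego} delivers both $\analytic$-completeness statements. I do not anticipate a real obstacle here; the only point worth checking carefully is that a branch through $\psi(\varphi(T))$ really does provide the prefixes of elements of $\varphi(T)$ needed for the Mathias argument, which it does because $\psi$ merely closes a set of sequences under proper initial segments.
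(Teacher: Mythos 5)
Your proof is correct and follows essentially the same route as the paper: the paper's argument is precisely that Mathias bushes are both Miller and Laver (giving the forward direction of the reduction), while the stronger fact established in the proof of \Cref{mathias_trees} --- that any infinite branch through $\varphi(T)$ yields an infinite branch through $T$ --- handles the converse, since Miller and Laver trees are ill-founded. You have simply spelled out in detail what the paper compresses into one remark.
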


Similarly to the case of Mathias trees, we can also present the above results in terms of codes for some $\sigma$-ideals of subsets of the Baire space. A subset of the Baire space contains a body of a Miller tree if and only if it is not $\sigma$-compact (\cite{kechris}) and contains a body of a Laver tree if and only if it is strongly dominating (\cite[Lemma 2.3]{TreeIdeals}).

\begin{corollary}
    \begin{enumerate}
        \item The set of codes for closed $\sigma$-compact sets is $\coanalytic$-complete.
        \item The set of codes for closed not strongly dominating sets is $\coanalytic$-complete.
    \end{enumerate}
\end{corollary}

We would also like to explore some known families of trees over the set $\{0,1\}$. Like before, such trees can be seen as points in the space $\mathcal{P}(\seq2)$, homeomorphic to $\mathcal{P}(\omega)$ (from which we can derive a Polish space $\textrm{Tree}_2$). However, unlike in case of $\omega$, $\textrm{IF}_2$ is a $G_\delta$ subset of $\textrm{Tree}_2$.

\begin{definition}
    A tree $T\in\textrm{Tree}_2$ is called a \textit{Sacks tree} (or \textit{perfect tree}) if
    $$(\forall\sigma\in T)(\exists\tau\in T)(\tau\supseteq\sigma\land\tau\concat0\in T\land\tau\concat1\in T).$$
    
    A tree $T\in\textrm{Tree}_2$ is called a \textit{Silver tree} if there exist a coinfinite set $A\subseteq\omega$ and a function $x:\omega\rightarrow 2$ such that
    $$T=\left\{\sigma\in\seq2: (\forall n\in|\sigma|\cap A)(\sigma(n)=x(n)\right\}.$$
    Then we call $A$ the \textit{domain} of $T$ (and $\omega\backslash A$ the \textit{co-domain}) and $x$ the \textit{Silver function} of $T$.
\end{definition}

\begin{theorem}
    The family $\mathcal{S}$ of all trees containing a Sacks tree is $\analytic$-complete.
\end{theorem}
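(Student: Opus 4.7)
The plan is to construct a Borel reduction $\varphi:\textrm{Tree}_\omega\to\textrm{Tree}_2$ with $\varphi^{-1}[\mathcal{S}]=\textrm{IF}_\omega$ and then invoke \Cref{analityczny_z_analitycznego}. Analyticity of $\mathcal{S}$ is immediate since $T\in\mathcal{S}$ is witnessed by an existentially quantified subtree $S\subseteq T$ satisfying the Borel perfectness condition.

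The idea behind $\varphi$ is to attach to each node $\sigma\in T$ a free binary decoration $\epsilon\in 2^{|\sigma|}$ so that distinct decorations produce distinct branches of $\varphi(T)$. For $\sigma=(a_0,\ldots,a_{k-1})\in\seqN$ and $\epsilon=(\epsilon_0,\ldots,\epsilon_{k-1})\in 2^k$ I would put
$$e(\sigma,\epsilon)=1^{a_0+1}\,0\,\epsilon_0\;1^{a_1+1}\,0\,\epsilon_1\;\cdots\;1^{a_{k-1}+1}\,0\,\epsilon_{k-1}\in\seq2,$$
which is uniquely decodable and preserves prefixes in both coordinates, and then define
$$\varphi(T)=\{\tau\in\seq2:(\exists\sigma\in T)(\exists\epsilon\in 2^{|\sigma|})(\tau\subseteq e(\sigma,\epsilon))\}.$$
This is closed under prefixes, hence a tree, and Borelness is routine: each $\{T:\tau\in\varphi(T)\}$ is a countable union of the clopen cylinders $\{T:\sigma\in T\}$ over admissible $\sigma$.

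The easy direction is that ill-foundedness of $T$ implies $\varphi(T)\in\mathcal{S}$. Given $\rho=(r_0,r_1,\ldots)\in[T]$, the map $2^\omega\ni\epsilon\mapsto 1^{r_0+1}0\epsilon_0\,1^{r_1+1}0\epsilon_1\cdots$ is a continuous injection into $[\varphi(T)]$ (each of its prefixes is a prefix of $e(\rho\upharpoonright k,\epsilon\upharpoonright k)$), so its image is a perfect set in $[\varphi(T)]$, and the associated prefix tree is a Sacks subtree of $\varphi(T)$.

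The main obstacle is the converse. Assuming $T$ is well-founded I would show $[\varphi(T)]$ is countable, whence by Cantor--Bendixson it contains no perfect set, and so $\varphi(T)$ cannot contain a Sacks subtree. Given $\tau\in[\varphi(T)]$, parse $\tau$ greedily into blocks $1^{x_i}0y_i$ with $x_i\geq 1$ and $y_i\in\{0,1\}$; the structure of the encoding $e$ forces $\tau$ to start with a $1$ and each post-decoration position to be a $1$, making this decomposition rigid. Either (a) every $x_i$ is finite, in which case uniqueness of decoding together with closure of $T$ under prefixes yields $(x_0-1,x_1-1,\ldots)\in[T]$, contradicting well-foundedness; or (b) some block is infinite. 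Taking the first such $i$, one gets $\tau=1^{x_0}0y_0\cdots 1^{x_{i-1}}0y_{i-1}\,1^\omega$, and such $\tau$ is entirely determined by the pair $\bigl((x_0-1,\ldots,x_{i-1}-1),(y_0,\ldots,y_{i-1})\bigr)\in T\times 2^i$. Since $T$ is countable, only countably many $\tau$ of type (b) arise, completing the count.
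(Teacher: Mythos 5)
Your proof is correct, but it takes a genuinely different route from the paper. The paper reduces from the family $U(2^\omega)$ of uncountable compact subsets of $2^\omega$ (citing its $\analytic$-completeness from Srivastava), mapping $K\in K(2^\omega)$ to its tree of initial segments; the equivalence $\varphi^{-1}[\mathcal{S}]=U(2^\omega)$ then rests on the \emph{hard} direction of the perfect set theorem for closed sets (every uncountable closed subset of $2^\omega$ contains a perfect set). You instead reduce directly from $\textrm{IF}_\omega$, which is closer in spirit to the paper's ``unified approach'' in the ideal section: your block encoding $1^{a_i+1}0\epsilon_i$ is rigidly decodable, the free decoration bits $\epsilon_i$ turn an infinite branch of $T$ into a perfect set of branches of $\varphi(T)$, and for well-founded $T$ your case analysis (infinitely many completed blocks versus a tail of the form $1^\omega$) shows $[\varphi(T)]$ is countable, so it cannot contain the body of a nonempty Sacks tree --- this uses only the \emph{easy} fact that nonempty perfect sets are uncountable, not Cantor--Bendixson. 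The exhaustiveness of your two cases does deserve one explicit sentence (a branch that completes only finitely many blocks must end in $1^\omega$, since a partial block $1^{a+1}0$ is completed by its very next bit), but this is routine. In sum: the paper's argument is shorter given the cited lemma about $K(X)$ but imports the hyperspace machinery and the perfect set theorem; yours is longer and more hands-on but self-contained modulo the $\analytic$-completeness of $\textrm{IF}_\omega$, which the paper already relies on throughout.
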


    For the proof we will need the following lemma, the proof of which can be found in \cite[Proposition 4.2.5]{srivastava}.
    \begin{lemma}
        Let $X$ be an uncountable Polish space. Then
        $$U(X)=\left\{K\in\ K(X): |K|>\omega\right\}$$
        is $\analytic$-complete, where $K(X)$ is the space of all compact subsets of $X$ equipped with the Vietoris topology.
    \end{lemma}
\begin{proof}
    (of the theorem) Define $\varphi: K(2^\omega)\rightarrow\textrm{Tree}_2$ by
    $$\varphi(K)=\left\{\sigma\upharpoonright k: \sigma\in K, k\in\omega\right\}.$$
    We need to show that $\varphi$ is Borel and $\varphi^{-1}[\mathcal{S}]=U(2^\omega)$. Let us start with the second assertion. By $\mathbb{S}a$ denote the family of all Sacks trees.
    \begin{align*}
        \varphi^{-1}[\mathcal{S}]&=\left\{K\in K(2^\omega): \varphi(K)\in\mathcal{S}\right\}\\
        &=\left\{K\in K(2^\omega): (\exists T\in\mathbb{S}a)([T]\subseteq K)\right\}\\
        &=\left\{K\in K(2^\omega): |K|>\omega\right\}=U(2^\omega),
    \end{align*}
    where the second to last equality comes from the perfect set property for closed subsets of $2^\omega$.

    Now, fix $s\in\seq2$. With $[s]_{\textrm{Tree}_2}$ and $[s]_{2^\omega}$ denote the subbase clopen sets generated by $s$ in the spaces $\textrm{Tree}_2$ and $2^\omega$, respectively, i.e. $$[s]_{\textrm{Tree}_2}=\left\{T\in\textrm{Tree}_2: s\in T\right\}, [s]_{2^\omega}=\left\{\sigma\in 2^\omega: \sigma\supseteq s\right\}.$$
    \begin{align*}
        \varphi^{-1}[[s]_{\textrm{Tree}_2}]&=\left\{K\in K(2^\omega): (\exists\sigma\in K)(\exists k\in\omega)(s=\sigma\upharpoonright k)\right\}\\
        &=\left\{K\in K(2^\omega): K\cap[s]_{2^\omega}\ne\emptyset\right\},
    \end{align*}
    which is an open set in the Vietoris topology. Therefore, the function $\varphi$ is continuous.
\end{proof}
\begin{theorem}
    The family $\mathcal{V}$ of all trees containing a Silver tree is $\analytic$-complete.
\end{theorem}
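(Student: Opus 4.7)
The plan is to construct a continuous reduction from $\textrm{IF}_\omega$ to $\mathcal{V}$, exploiting the fact that a branch of an ill-founded tree over $\omega$ provides an infinite ``free'' set of coordinates on which to build a Silver tree in $2^{<\omega}$. I would fix an order-preserving bijection $\alpha: \seqN \to \omega$ satisfying (\ref{warunek_alfa}) and define $\varphi: \textrm{Tree}_\omega \to \textrm{Tree}_2$ by declaring $s \in \varphi(T)$ exactly when $\{\alpha^{-1}(i) : i < |s|,\; s(i) = 1\}$ is a chain in $T$, i.e.\ a set of pairwise comparable elements of $T$. The set $\varphi(T)$ is closed under initial segments (taking a prefix of $s$ replaces its $1$-set with a subset, and subsets of chains are chains) so it is a tree in $2^{<\omega}$, and $\varphi$ is continuous because membership $s \in \varphi(T)$ depends only on the intersection of $T$ with the finite set $\{\alpha^{-1}(i) : i < |s|\}$.

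For the easy direction, given $\beta \in [T]$, I would let $A = \omega \setminus \alpha[\{\beta \upharpoonright n : n \in \omega\}]$, which is coinfinite, and take $x$ identically zero on $A$. The Silver tree with domain $A$ and Silver function $x$ consists precisely of the binary strings whose $1$-positions lie in $\alpha[\{\beta \upharpoonright n : n \in \omega\}]$; any such $1$-set pulls back via $\alpha^{-1}$ to a finite set of initial segments of $\beta$, which is a chain in $T$ by (\ref{warunek_alfa}). Hence this Silver tree is contained in $\varphi(T)$ and $\varphi(T) \in \mathcal{V}$.

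The harder direction, where the main subtlety lies, is showing that if $\varphi(T)$ contains \emph{any} Silver subtree then $T$ is ill-founded, since we must accommodate an arbitrary Silver function $x$ rather than just the zero function. Given a Silver subtree $S \subseteq \varphi(T)$ with domain $A$ and Silver function $x$, I would consider the single sequence $y \in 2^\omega$ defined by $y(i) = x(i)$ for $i \in A$ and $y(i) = 1$ for $i \notin A$. Every restriction $y \upharpoonright N$ then lies in $S \subseteq \varphi(T)$, so $\{\alpha^{-1}(i) : i < N,\; y(i) = 1\}$ is a chain in $T$ for each $N$; these finite chains are nested, so their union is itself a chain in $T$ and contains $\alpha^{-1}[\omega \setminus A]$. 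Since $\omega \setminus A$ is infinite, this union is an infinite chain in $\seqN$, whose elements necessarily have unbounded length and therefore determine an infinite branch of $T$. Combining $\varphi^{-1}[\mathcal{V}] = \textrm{IF}_\omega$ with \Cref{analityczny_z_analitycznego} yields the desired $\analytic$-completeness of $\mathcal{V}$.
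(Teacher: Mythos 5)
Your reduction is correct, and it is a genuinely different (and substantially simpler) construction than the one in the paper. The paper also reduces $\textrm{IF}_\omega$ to $\mathcal{V}$, but it does so by encoding each node $n$ of a branch as a rigid binary block $\beta(\alpha(n))\concat01i01$ with one free bit $i$ per node, concatenating these blocks along the elements of $T$, and then ``treefying''; the Silver tree arising from a branch has co-domain equal to the set of free-bit positions, and the hard direction is a lengthy case analysis showing that the co-domain of an arbitrary Silver subtree cannot meet any position outside the free bits (each forbidden position forces a pattern, such as a block $10$ at the wrong parity or adjacent $1$s, that the coding cannot produce). Your map instead lets the $1$-positions of $s\in\seq2$ name, via $\alpha^{-1}$, a finite chain in $T$, which collapses both directions to a few lines: a branch of $T$ yields the Silver tree of all strings supported on that chain (with zero Silver function), and conversely any Silver subtree with domain $A$ contains the single branch $y$ that equals $1$ off $A$, whose $1$-set pulls back to an infinite chain in $T$ and hence to a branch. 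What the paper's coding buys is an explicit, essentially injective correspondence between branches of $T$ and the Silver subtrees produced, which your construction deliberately gives up; what yours buys is the elimination of the entire case analysis and of the auxiliary treefying map $\psi$, since your $\varphi(T)$ is already a tree and is continuous for the obvious reason. Two small points worth making explicit in a final write-up: such an order-preserving \emph{bijection} $\alpha$ exists because every element of $(\seqN,\subseteq)$ has only finitely many predecessors, so the partial order has a linear extension of type $\omega$; and $\mathcal{V}$ is analytic (a projection along the Polish space of pairs $(A,x)$ of a Borel condition), which is needed for $\analytic$-completeness as defined here --- the paper is equally silent on this, so it is not a gap relative to the source.
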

\begin{proof}
    We construct a Borel function $\varphi: \textrm{Tree}_\omega\rightarrow\textrm{Tree}_2$ such that $\varphi^{-1}[\mathcal{V}]=\textrm{IF}_\omega$. First, define some auxiliary functions. Let $\delta: \omega\rightarrow\seq2$ be the binary expansion of natural numbers, $\beta:\seq2\rightarrow\seq2$ be defined by
    $$\beta((a_0a_1\ldots a_k))=(a_0a_0a_1a_1\ldots a_ka_k)$$
    and $\gamma:\seqN\rightarrow\mathcal{P}(\seq2)$ be defined by
    \begin{multline*}
        \gamma((n_0n_1\ldots n_k))=\{\beta(\delta(n_0))\concat01i_001\concat\ldots\concat\beta(\delta(n_k))\concat01i_k01: \\i_0,i_1\ldots,i_k\in\{0,1\}\}.
    \end{multline*}
    Now we can define the function $\varphi':\textrm{Tree}_\omega\rightarrow\mathcal{P}(\seq2)$:
    $$\varphi'(T)=\bigcup_{\sigma\in T}\gamma(\sigma).$$
    As we can see, $\varphi'(T)$ is not a tree, because it is not closed under taking subsequences. However, we can fix it by "treefying" a result, i.e., define
    $$\psi(A)=\left\{\sigma\upharpoonright k: \sigma\in A, k\leq|\sigma|\right\}$$
    $$\varphi=\psi\circ\varphi'.$$
    Clearly, if $\tau=(\tau_0\tau_1\tau_2\ldots)\in[T]$ then $\varphi(T)$ contains a Silver tree with a Silver function $$x=\beta(\delta(\tau_0))\concat01\underline{0}01\concat\beta(\delta(\tau_1))\concat01\underline{0}01\concat\ldots$$ and co-domain the set of indices of underlined "$0$"s.

    Suppose now that $\varphi(T)$ contains a Silver tree $S$. The Silver function of $S$ has to be of the form $$x_S=\beta(\delta(\tau_0))\concat01i_001\concat\beta(\delta(\tau_1))\concat01i_101\concat\ldots$$ for some $\tau=(\tau_0\tau_1\tau_2\ldots)$ and $i_0,i_1,\ldots\in\{0,1\}$ (because no other sequences are produced by $\varphi$). Note that if the co-domain of $S$ is a subset of positions of "$i_k$"s in $x_S$, $\tau\in[T]$ (and $\tau$ can be uniquely determined from $x_S$). Assume then that there is another $m$ in the co-domain of $S$ (denote the co-domain of $S$ by $A_S$). Without loss of generality $m\leq |\beta(\delta(\tau_0))\concat01i_001|$ (the argument can be repeated inside any block of the form $\beta(\delta(\tau_k))\concat01i_k01$).

    First, note that $m\neq|\beta(\delta(\tau_0))|+4$ and $m\neq|\beta(\delta(\tau_0))|+5$ (i.e. it can not be any of the last two positions), since $\varphi$ fixes values at two positions after the block of the form $\beta(n)\concat01i$. Consider the case $m=|\beta(\delta(\tau_0))|+2$. As $m\in A_S$, $\beta(\delta(\tau_0))\concat01, \beta(\delta(\tau_0))\concat00\in S$. Since in a Silver tree every sequence can be extended to one which follows the Silver function, $\beta(\delta(\tau_0))\concat00i_001\concat\beta(\delta(\tau_1))\concat0\in S$. But then in $S$ there is a sequence of the form $\beta(\delta(\tau_0))\concat(00)^k\concat(11)^l\concat10$ (for some $k,l\in\omega$), which is not possible by the form of the function $\varphi$.

    Now consider the case $m=|\beta(\delta(\tau_0))|+1$. Because $m\in A_S$, $\beta(\delta(\tau_0))\concat0,\beta(\delta(\tau_0))\concat1\in S$. But then, similarly as in the previous case, $\beta(\delta(\tau_0))\concat11i_001\beta(\delta(\tau_1))0\in S$. Therefore, a sequence of the form $\beta(\delta(\tau_0))\concat11\concat(00)^k\concat(11)^l\concat10$ (for some $k,l\in\omega$) is in $S$, which is again impossible.

    There are two more possibilities for $m\leq|\beta(\delta(\tau_0))|$: either $2\mid m$ or $2\nmid m$. Let us focus on the second one (the first one can be analyzed in the similar way). If $\beta(\delta(\tau_0))$ has $1$ at the position $m$, it has $1$ also at the position $m-1$. But then the sequence of the form $\beta(n)\concat10$ (for some $n\in\omega)$ is in $S$, which leads to a contradiction. If $\beta(\delta(\tau_0))$ has $0$ at positions $m$ and $m-1$, $\beta(n)\concat00,\beta(n)\concat01\in S$ for some $n\in\omega$ and $\beta(n)\concat00\subseteq\beta(\delta(\tau_0))$. There is $\upsilon\in\seq2$, consisting only of blocks $00$ and $11$, such that $\beta(\delta(\tau_0))=\beta(n)\concat00\concat\upsilon$. Then $\beta(n)\concat01\concat\upsilon\concat01\in S$, so either $\beta(n)\concat0101\in S$, $\beta(n)\concat0111\in S$, $\beta(n)\concat010001\in S$ (which all lead to immediate contradiction) or $\beta(n)\concat010011\concat\mu\concat01\in S$, where $\upsilon=0011\concat\mu$. Hence, there is $a\in\omega$ such that either $1\concat\mu\concat0\subseteq\beta(a)$ or $\beta(a)\subseteq1\concat\mu\concat0$. The first case leads to the situation that $\beta(a)$ contains the block "$10$" with $1$ at an even position, which is not possible. However, in the second one $\beta(a)\concat10\subseteq1\concat\mu\concat0$, which is again impossible, because then $\beta(n)\concat01001\concat\beta(a)\concat10\in S$.

\end{proof}

Once again, containing a body of a Silver tree can be expressed in terms of being positive in regard to a $\sigma$-ideal of subsets of the Cantor space, namely the $\sigma$-ideal $I_G$ generated by Borel $G$-independent sets (where $G$ is a graph on $2^\omega$, \cite[Fact 4.7.16]{zapletal}).

\begin{corollary}
    The set of codes for closed $I_G$ sets is $\coanalytic$-complete.
\end{corollary}

Let us finish by showing that there are classical ideals, closed sets of which are not coded by $\coanalytic$-complete sets.

\begin{proposition}
    The set of codes for closed meager subsets of the Cantor (or Baire) space is Borel.
\end{proposition}
\begin{proof}
    Since the below argument can be directly rewritten for the Baire space, we will focus on the Cantor space. Every closed meager subset of $2^\omega$ can be coded by a tree $T$ such that
    $$(\forall\sigma\in\seq2)(\exists\tau\in\seq2)(\tau\supseteq\sigma\land\tau\notin T).$$
    Clearly, the set of all trees $T$ satisfying the above condition is a $G_\delta
    $ subset of $\mathcal{P}(\seqN)$.
\end{proof}
\begin{proposition}
    The set of codes for closed measure zero subsets of the Cantor space is Borel.
\end{proposition}
\begin{proof}
    A set $A\subseteq\seq2$ is null if for every $\varepsilon>0$ $A$ can be covered by countably many clopen sets $(\sigma_n)_{n\in\omega}$ of small measure, i.e.
    $$\sum_{n\in\omega}\frac{1}{2^{|\sigma_n|}}<\varepsilon.$$
    If $A$ is closed, it is compact (since the Cantor space is compact). Hence, the countable cover from the above fact can be replaced with a finite cover. Therefore, a closed measure zero set can be coded by a tree $T$ satisfying the condition
    \begin{multline*}
        \big(\forall k\in\omega\big)\big(\exists (\sigma_i)_i\in(\seq2)^{<\omega}\big)\big(\sum_n\frac{1}{2^{|\sigma_n|}}<\frac{1}{k+1}\land\\\big(\forall\tau\in T\big)\big(\exists i\big)\big(\sigma_i\subseteq\tau\lor\tau\subseteq\sigma_i\big)\big).
    \end{multline*}
    As all quantifiers used in the above condition are through countable sets, the set of all such $T$s is clearly Borel.
\end{proof}

\end{document}